\documentclass[11pt,a4paper,twoside]{amsart}

\usepackage{amsmath,amssymb,amsthm}
\usepackage{inputenc}
\usepackage{tikz} 
\usepackage{float}
\floatstyle{boxed} 
\usepackage{graphicx,subfigure}
\usepackage{caption}
\usepackage{pdfsync} 
\usepackage[T1]{fontenc} 
\usepackage{enumitem}
\usepackage{mathtools}
\usepackage{mathrsfs}
\usepackage{slashed}
\usepackage{xcolor}
\usepackage[linktocpage=true,colorlinks=true, linkcolor=blue, citecolor=blue, urlcolor=blue,hyperfootnotes=false]{hyperref}
\usepackage{cleveref}

%
%
%
%
%
%
%


\def\@begintheorem#1#2{\par\bgroup{\sc #1 \ #2. }  \it \\\ignorespace }
\def\@opargbegintheorem#1#2#3{\par\bgroup{\sc #1\ #2 \ (#3).}  \it  \ignorespace}
\def\@endtheorem{\egroup}


\theoremstyle{plain}
\newtheorem{theorem}{Theorem}[section]

\theoremstyle{definition}

\theoremstyle{remark}
\newtheorem{remark}[theorem]{Remark}

\theoremstyle{plain}

\theoremstyle{plain}
\newtheorem{lemma}[theorem]{Lemma}

\theoremstyle{plain}

\theoremstyle{plain}

\numberwithin{equation}{section}


\newcommand{\C}{{\mathbb C}}

\newcommand{\R}{{\mathbb R}}

\newcommand{\I}{\mathbb{I}}

\newcommand{\V}{\mathbf{V}}

\renewcommand{\S}{{\mathbf{S}}}

\newcommand{\Rt}{{\R}^3}
\newcommand{\SL}{\S\cdot L}

\newcommand{\hx}{{\hat{x}}}
\renewcommand{\H}{\mathcal{H}}

\DeclareMathOperator{\Realpart}{Re}
\renewcommand{\Re}{\Realpart}

{\left\lbrace\begin{array}{@{}l@{}}}%
{\end{array}\right.}

\DeclarePairedDelimiter{\abs}{\lvert}{\rvert}

\DeclarePairedDelimiter{\seq}{\lbrace}{\rbrace}

\newcommand{\Dirac}{\slashed{\mathcal{D}}}


\textwidth 16cm
\hoffset -1cm
\voffset -1cm
\oddsidemargin 1cm
\evensidemargin 1cm
\headsep 1cm
\parskip 8pt


\newcommand{\ignora}[1]{}
\newcommand{\verde}[1]{}

\title[A relativistic Hardy-type inequality with minimisers]
{A relativistic Hardy-type inequality with minimisers}
\date{\today}

  \author[L.~Fanelli]{Luca Fanelli}
\address[L.~Fanelli]{Ikerbasque, Basque Foundation for Science, 48011 Bilbao, Spain, \& Universidad del Pa\'is Vasco / Euskal Herriko Unibertsitatea, 48080 Bilbao, Spain, \& BCAM - Basque Center for Applied Mathematics, 48009 Bilbao, Spain}
\email{luca.fanelli@ehu.eus}

\author[F.~Pizzichillo]{Fabio Pizzichillo}
\address[F.~Pizzichillo]{Departamento de Matemática e Informática Aplicadas a la Ingeniería Civil y Naval,
Universidad Politécnica de Madrid\\
E.~T.~S.~I.~de Caminos, Canales y Puertos,
Calle del Profesor Aranguren 3,
Madrid, 28040 (Spain) \& BCAM - Basque Center for Applied Mathematics, 48009 Bilbao, Spain}
\email{fabio.pizzichillo@upm.es}

\subjclass[2020]{Primary 81Q10; Secondary 47N20,  35P05, 47B25.}


\usepackage{csquotes}

\begin{document}
\begin{abstract}
In this paper, we prove a sharp, weighted Hardy-type inequality for the Dirac operator. 
A key feature of our result is that the inequality is not only sharp but also attained, and we construct explicit minimizers that satisfy the equality case. This extends previous work on the spectral properties of Dirac operators, especially in the context of relativistic quantum mechanics and Coulomb-like potentials.
\end{abstract}
\maketitle

\section{Introduction and main results}
First formulated by Hardy in 1920 \cite{hardy1920note}, the Hardy inequality is a fundamental result in analysis, particularly relevant to the study of partial differential equations and functional spaces. It provides a bound for integrals involving a function and its gradient.
There are multiple versions of the Hardy inequality (see \cite{kufner2006prehistory}), but the most common modern form is expressed in $\R^d$ for dimensions $d \geq 3$, as follows: 
\begin{equation}\label{eq:classical}
\int_{\R^d}|\nabla \psi|^2\,dx\geq\frac{(d-2)^2}{4}\int_{\R^d}\frac{|\psi|^2}{|x|^2}\,dx,
\end{equation}
for every weakly differentiable function $\psi$ in $\R^d$ such that $|\nabla \psi|\in L^2(\R^d)$ and decaying to zero at infinity. 
It is well known that the inequality \eqref{eq:classical} is \emph{sharp}, meaning the dimensional constant is optimal, and no additional non-negative term can be added on the right-hand side. However, the inequality is \emph{not attained}, in the sense that it does not exist any function $\psi$ such that $\nabla\psi\in L^2(\R^d)$ and equality holds. This absence of minimizers has motivated research into improved versions of \eqref{eq:classical} and related inequalities. For instance, advancements have been made by replacing the domain $\R^d$ with any open bounded subset $\Omega$ containing $0$, and assuming that $\psi$ lies in the Sobolev space $H^1_0(\Omega)$ (see \cite{MR2457816}).

Attained inequalities are worth because they establish when the sharpest bounds are achieved, providing precise insights for analysis and optimization. These inequalities help to identify exact conditions under which equality holds, offering practical tools for solving mathematical problems.

In Quantum Mechanics, the Hardy inequality plays a crucial role in studying systems with singular potentials, such as the Coulomb potential, by providing bounds on the kinetic energy operator. 
This paper aims to extend these results by proving a weighted Hardy-type inequality with minimisers for the Dirac operator, which is an unusual feature compared to the classical case \eqref{eq:classical}.

For $m \geq 0$, the \emph{free Dirac operator} in $\Rt$ is defined by 
\[
\Dirac_m:=-i\alpha\cdot\nabla+m\beta =-i \sum_{j=1}^3 \alpha_j \partial_j + m\beta, 
\]
where
\begin{equation*}
  \beta:=
  \begin{pmatrix}
\mathbb{I}_2&0\\
0&-\mathbb{I}_2
\end{pmatrix},
\quad
\alpha:=(\alpha_1,\alpha_2,\alpha_3),
\quad
\alpha_j:=\begin{pmatrix}
0& {\sigma}_j\\
{\sigma}_j&0
\end{pmatrix}\quad \text{for}\ j=1,2,3,
\end{equation*}
and 
$\sigma_j$ are the \emph{Pauli matrices}
\[
\quad{\sigma}_1 =
\begin{pmatrix}
0 & 1\\
1 & 0
\end{pmatrix},\quad {\sigma}_2=
\begin{pmatrix}
0 & -i\\
i & 0
\end{pmatrix},
\quad{\sigma}_3=
\begin{pmatrix}
1 & 0\\
0 & -1
\end{pmatrix}.
\]
It is well known (see \cite{thaller}) that $\Dirac_m$ is self-adjoint on $H^1(\R^3;\C^4)$ and essentially
self-adjoint on $C_c^{\infty}(\R^3;\C^4)$. Moreover 
\[
\sigma(\Dirac_m)=\sigma_{ess}(\Dirac_m)=(-\infty,-m]\cup[m,+\infty).
\] 

One can apply \eqref{eq:classical} to explore the spectral properties of the Dirac operator perturbed by the Coulomb potential $\V_C$ defined as:
\begin{equation}\label{eq:coulomb.potential}
\V_{C}(x)=\frac{\nu}{|x|}\I_4,
\end{equation}
with $\nu=e^2Z/\hbar$, 
where 
$Z$ is the atomic number, $e$ is the charge of the electron and $\hbar$ is the Plank's constant (we set $\hbar=1$).
The operator $\Dirac_m + \V_C$ describes relativistic spin--$\frac12$ particles in the external electrostatic field of an atomic nucleus. 
Alongside the Kato-Rellich theorem, one finds that $\Dirac_m + \V_C$ is self-adjoint on $H^1(\Rt;\C^4)$ for $|\nu| < 1/2$, and essentially self-adjoint on $C^\infty_c(\Rt;\C^4)$ for $|\nu|\leq 1/2$, as discussed in \cite{kato1951fundamental}. However, this result is physically insufficient for describing all known atoms. Extending the range to $|\nu| \leq \sqrt{3}/2$ is necessary, as shown in \cite{rellich1953eigenwerttheorie, weidmann1971oszillationsmethoden, rejto1971some, gustafson1973some, schmincke1972essential}.
These results suggest that the Hardy inequality \eqref{eq:classical}  is not optimal to analyse the self-adjointness of perturbed Dirac operators, since it does not take into account the matrix structure of the Dirac operator. 
Hence, more suitable Hardy-type inequalities are needed. Kato addressed this in \cite{kato1983holomorphic}, proving a $4$-spinor Hardy inequality, first conjectured by Nenciu in \cite{nenciu1976self}:
  \begin{equation}\label{eq:hardy.ADV}
    \int_{\Rt}\frac{|\psi|^2}{|x|}\,dx \leq \int_{\Rt}\abs*{(-i\alpha\cdot\nabla+m\beta)\psi}^2|x|\,dx, \quad\text{for}
    \ \psi\in C^\infty_c(\Rt;\C^4).
  \end{equation}
This inequality enabled Kato to tackle the self-adjointness of the Dirac-Coulomb operator for general Hermitian matrix-valued potentials $\V$ satisfying
  \begin{equation}\label{eq:cond.V}
    \sup_{x\in\Rt} |x||\V(x)|=: \nu <1.
  \end{equation}
Further properties of the self-adjointness of the Dirac-Coulomb operator have since been explored in \cite{adv2013self,adv2018erratum} again using inequality  \eqref{eq:hardy.ADV}.

A refined version of \eqref{eq:hardy.ADV} was later derived in \cite{KlausSeveral}:
\begin{equation}\label{eq:hardy.con.a}
\frac{m^2-a^2}{m^2}\int_{\Rt}\frac{|\psi|^2}{|x|}\,dx
\leq
\int_{\Rt}\left|(-i\alpha\cdot\nabla+m\beta-a)\psi\right|^2|x|\,dx, \quad\text{for}\ a\in(-m,m)
\end{equation}
This inequality opened the way to prove the Birman-Schwinger principle for the Dirac-Coulomb operator. More recently, \eqref{eq:hardy.con.a} was re-proved using elementary integration by parts in \cite{coulombluis}.
This proof established that the inequality is sharp and is attained for $a \neq 0$. As a result, the authors were able to formulate conditions on the \emph{ground-state energy}, showing that it is achieved if and only if the potential $V$ satisfies specific rigidity conditions. In particular, for an electrostatic potential, these conditions imply that $V$ must be the Coulomb potential \eqref{eq:coulomb.potential}.
Nevertheless, all this results are only valid  in the \emph{sub-critical} case, that is when $\nu<1$ in \eqref{eq:cond.V}.

To extend beyond this restriction, we refer to the seminal work of Dolbeault, Esteban, and Séré \cite{dolbeault2000eigenvalues}. They developed a min-max formula for determining eigenvalues within the gap of the essential spectrum of the Dirac operator perturbed by Coulomb-like potentials $\V$ satisfying:
\begin{equation}
  \label{eq:des.potential}
  \V(x):= V(x) \I_4,
  \quad
 \lim_{|x|\to+\infty}
 |V(x)|=0,
 \quad
 -\frac{\nu}{|x|}-c_1\leq V\leq c_2:=\sup (V),
\end{equation}
with $\nu\in(0,1)$ and $c_1,c_2\geq 0$, $c_1+c_2-1<\sqrt{1-\nu^2}$.
The main key is  the following Hardy-type inequality:
\begin{equation}\label{eq:hardy.des}
\int_{\R^3} \frac{\abs{\sigma\cdot \nabla \varphi}^2}{1 + \frac{1}{\abs{x}}} + 
	\int_{\R^3} \left(1 - \frac{1}{\abs{x}}\right)\abs{\varphi}^2  \geq 0,
	\quad \text{ for all } \varphi \in C^\infty_c(\R^3;\C^2),
\end{equation}
see also 
\cite{dolbeault2004analytical} for a later direct analytical proof.
Using this inequality, Esteban and Loss considered general electrostatic potentials $\V$ in \cite{estebanloss} that satisfy \eqref{eq:des.potential} for $\nu \leq 1$, constructing the domain of self-adjointness for the operator $\Dirac_m + \V$.

More recently, \eqref{eq:hardy.des} has been generalized as follows \cite{els2020diraccoulomb1,els2020diraccoulomb2}
\[
\int_{\R^3} \frac{\left|\sigma\cdot \nabla \varphi\right|^2}{1+\mu*\frac{1}{|x|}} \geq \nu_0^2
	\int_{\R^3} \left(-1+ \mu*\frac{1}{|x|}\right)\abs{\varphi}^2
\]
for any probability measure $\mu$ and $\nu_0$ the optimal constant to determine that verifies $\frac{2}{\frac{\pi}{2}+\frac{2}{\pi}}\leq \nu_0\leq 1$. In detail, they state that proving $\nu_0 = 1$ would imply that the first eigenvalue inside the gap $(-m,m)$ of $\Dirac_m - \nu \mu * \frac{1}{|x|} \I_4$ is achieved only when $\mu=\delta_0$ is the Dirac delta centrer at the origin, see \cite{els23} for further details.

This paper aims to build on these results by proving a new weighted Hardy-type inequality for the Dirac operator, which admits minimizers and identifies the conditions under which the sharpest bound is attained.

\section{Main results}

In this section, we present the main result of this paper: a sharp, weighted Hardy-type inequality for the Dirac operator. This result incorporates the use of the \emph{spin angular momentum operator} $\S$ and the \emph{orbital angular momentum operator} $L$.
\begin{equation}\label{eq:defn.spin}
\mathbf{S}=
\frac{1}{2}\left(
\begin{array}{cc}
\sigma & 0\\
0 & \sigma
\end{array}
\right)\quad
\text{and}\quad
L:=-ix\wedge \nabla.
\end{equation}

\begin{theorem}\label{thm:hardy}
Let $m\geq 0$ and let $\omega,\eta \in L^1_{loc}$ be radial positive functions and assume that it exist $\tau\geq \frac{\gamma}2\geq 0$ such that
	\begin{align}
	\label{eq:condition-weight-sup}
 		\operatorname*{ess\,sup}_{x\in\Rt}&\
 		\left|\frac{\partial_r\omega\cdot\eta-\omega\cdot\partial_r\eta}{\omega}\right|:= \gamma \\
 		\label{eq:condition-weight-inf}
 		\operatorname*{ess\,inf}_{x\in\Rt} &\ \frac{\eta(x)}{|x|}:= \tau\quad
	\end{align} 
	where  $\partial_r$ denotes the radial derivative.
Then:
\begin{equation}\label{eq:hardy.con.phi}
\left(\tau-\frac{\gamma}{2}\right)^2 \int_{\Rt}\frac{|\psi|^2\omega}{\eta^2}\,dx
\leq
\left(\tau-\frac{\gamma}{2}\right)^2 \int_{\Rt}\frac{|(1+2\S\cdot L)\psi|^2\omega}{\eta^2}\,dx
\leq\int_{\Rt}\left|\Dirac_m\psi\right|^2\omega\,dx.
\end{equation}
The inequalities are sharp, in the sense that the constants on the left
hand side can not be improved.
\end{theorem}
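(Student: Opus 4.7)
My plan is to establish the two inequalities in \eqref{eq:hardy.con.phi} in turn. The first (a Hardy-type comparison between $\psi$ and $(1+2\S\cdot L)\psi$) is essentially an angular statement about $P:=1+2\S\cdot L$: since $P$ is block-diagonal in the spin structure with block $1+\sigma\cdot L$, and since $1+\sigma\cdot L$ is self-adjoint on $L^2(S^2;\C^2)$ with spectrum contained in $\Z\setminus\{0\}$ (the usual Dirac angular-momentum spectrum), one has $P^2\geq\I$ as an operator on $L^2(S^2;\C^4)$ at each fixed $r$. Multiplying the resulting pointwise-in-$r$ bound $\int_{S^2}|\psi|^2\,d\sigma\leq\int_{S^2}|P\psi|^2\,d\sigma$ by the nonnegative radial weight $\omega(r)/\eta(r)^2\cdot r^2$ and integrating in $r$ yields the first inequality.

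For the main (right) inequality the plan is a multiplier argument combined with Cauchy-Schwarz. I test $\Dirac_m\psi$ against $i\alpha_rP\psi/\eta$ in $L^2(\omega\,dx)$, where $\alpha_r:=\alpha\cdot\hat{x}$. The key algebraic inputs are the radial splitting $-i\alpha\cdot\nabla=\alpha_r(-i\partial_r-i/r+iP/r)$ together with $\alpha_r^2=\I$, $\{\alpha_r,\beta\}=0$, $\{\alpha_r,P\}=0$, and $[\beta,P]=0$. Expanding the pointwise product, using $(P\psi)^*P\psi=|P\psi|^2\in\R$, and performing a single radial integration by parts based on $2\Re\int_{S^2}(\partial_r\psi)^*P\psi\,d\sigma=\partial_r\int_{S^2}\psi^*P\psi\,d\sigma$ (which follows from self-adjointness of $P$ on $L^2(S^2;\C^4)$ and $[P,\partial_r]=0$), I expect to arrive at the identity
\[
\Re\!\int\!(\Dirac_m\psi)^*\frac{i\alpha_rP\psi}{\eta}\omega\,dx=\frac{1}{2}\!\int\!\psi^*P\psi\,\frac{\omega'\eta-\omega\eta'}{\eta^2}\,dx+\!\int\!\frac{|P\psi|^2\omega}{r\eta}\,dx-m\!\int\!\Im[\psi^*\beta\alpha_rP\psi]\,\frac{\omega}{\eta}\,dx.
\]

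The three terms on the right are then estimated under \eqref{eq:condition-weight-sup}--\eqref{eq:condition-weight-inf}. The mass cross-term vanishes: $\beta\alpha_rP$ is self-adjoint on $L^2(\Rt;\C^4)$ (combine $(\beta\alpha_r)^*=-\beta\alpha_r$ with $P^*=P$, $[\beta,P]=0$ and $\{\alpha_r,P\}=0$), and since $\omega/\eta$ is radial, the integral $\int\psi^*\beta\alpha_rP\psi\,\omega/\eta\,dx$ is real, so its imaginary part is zero. The middle term is bounded below by $\tau\int|P\psi|^2\omega/\eta^2\,dx$ via \eqref{eq:condition-weight-inf}. For the first term, \eqref{eq:condition-weight-sup} gives $|(\omega'\eta-\omega\eta')/\eta^2|\leq\gamma\omega/\eta^2$ almost everywhere, and then the pointwise Cauchy-Schwarz bound $|\psi^*P\psi|\leq|\psi||P\psi|$ combined with the already-proved first inequality yields $\int|\psi^*P\psi|\omega/\eta^2\,dx\leq\int|P\psi|^2\omega/\eta^2\,dx$. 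Adding the three estimates gives $\Re\int(\Dirac_m\psi)^*(i\alpha_rP\psi/\eta)\omega\,dx\geq(\tau-\gamma/2)\int|P\psi|^2\omega/\eta^2\,dx$. Applying Cauchy-Schwarz in $L^2(\omega\,dx)$ to the same left-hand side bounds it above by $\bigl(\int|\Dirac_m\psi|^2\omega\,dx\bigr)^{1/2}\bigl(\int|P\psi|^2\omega/\eta^2\,dx\bigr)^{1/2}$; squaring and dividing by $\int|P\psi|^2\omega/\eta^2\,dx$ (with the trivial case handled separately) gives the second inequality of \eqref{eq:hardy.con.phi}. Sharpness will follow by exhibiting a function in the $|\kappa|=1$ angular subspace that saturates the Cauchy-Schwarz step (i.e.\ satisfies $\Dirac_m\psi=\lambda\,i\alpha_rP\psi/\eta$ for some $\lambda$) and for which all three weight estimates above become equalities; this reduces the construction of minimizers to a first-order radial ODE. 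The main technical obstacles I anticipate are (i) the careful justification of the radial integration by parts, including vanishing of boundary contributions at $r=0$ and $r=\infty$ for $\psi\in C_c^\infty(\Rt\setminus\{0\};\C^4)$ with an extension by density, and (ii) ensuring that the weight hypotheses are compatible with an actual minimizer so that the sharp constant $(\tau-\gamma/2)^2$ is indeed attained rather than merely approached.
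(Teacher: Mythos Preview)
Your proposal is correct and follows essentially the same route as the paper. The paper expands the square $\int\big|\Dirac_m\psi+c(-i\alpha\cdot\hat x)\,P\psi/\eta\big|^2\omega\,dx\ge0$ and optimizes over $c$, while you pair $\Dirac_m\psi$ with $i\alpha_rP\psi/\eta$ and close with Cauchy--Schwarz; these are equivalent formulations, and your treatment of the three resulting terms (the radial integration by parts giving the $\partial_r(\omega/\eta)$ contribution bounded via \eqref{eq:condition-weight-sup}, the $|P\psi|^2/(r\eta)$ term bounded via \eqref{eq:condition-weight-inf}, and the vanishing mass term via self-adjointness of $\beta\alpha_rP$) matches the paper's estimates $I$, $II$, $III$ exactly.
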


\begin{remark}\label{thm:hardy.same}
Under the hypothesis of \Cref{thm:hardy}, assume further that $\omega=\eta$. In this case, \eqref{eq:condition-weight-sup} is verified with $\gamma=0$. Then, if $\eta=\omega$ verifies \eqref{eq:condition-weight-inf}, we obtain
\begin{equation}\label{eq:hardy.con.phi.same}
\tau^2\int_{\Rt}\frac{|\psi|^2}{\omega}\,dx
\leq
\tau\int_{\Rt}\frac{|(1+2\S\cdot L)\psi|^2}{\omega}\,dx
\leq\int_{\Rt}\left|\Dirac_m\psi\right|^2\omega\,dx.
\end{equation}
\end{remark}

\begin{theorem}
\label{thm:attainers-massless-exp}
Assume that $m = 0$. Define the weight functions $\omega(x) = \eta(x) = e^{|x|}$. With these choices, conditions \eqref{eq:condition-weight-sup} and \eqref{eq:condition-weight-inf} are satisfied with parameters $\gamma = 0$ and $\tau = e$, respectively.
Then the inequality  
\begin{equation}\label{eq:massless-exp}
e^2\int_{\Rt}\frac{|\psi|^2}{e^{|x|}}\,dx
\leq
e^2 \int_{\Rt}\frac{|(1+2\S\cdot L)\psi|^2}{e^{|x|}}\,dx
\leq\int_{\Rt}\left|-i\alpha\cdot\nabla\psi\right|^2e^{|x|},dx.
\end{equation}
admits as minimiser
\begin{equation}\label{eq:minimiser-massless-exp}
\psi_0(x)=
\begin{pmatrix}
c^-_{1/2,-1}\\
c^-_{-1/2,-1}\\
c^+_{1/2,-1}\\
c^+_{-1/2,-1}\\
\end{pmatrix}
\exp\left(e^{1-|x|}\right)
\quad
c^+_{\pm 1/2,\pm 1}\in\R.
\end{equation}
\end{theorem}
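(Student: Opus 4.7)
The plan is to verify directly that the chain of inequalities in \eqref{eq:massless-exp} collapses to a chain of equalities when $\psi=\psi_0$. Write $\psi_0(x) = c\,f(|x|)$ with a constant $c\in\C^4$ (whose components are the coefficients appearing in \eqref{eq:minimiser-massless-exp}) and $f(r)=\exp(e^{1-r})$; the verification splits into two natural steps.

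For the first inequality I would exploit the fact that $\psi_0$ is componentwise radial. Then $\nabla\psi_0 = f'(|x|)\,c\,\hx$ with $\hx = x/|x|$, so $L\psi_0 = -i\,x\wedge\nabla\psi_0 = 0$ pointwise. Therefore $(1+2\S\cdot L)\psi_0 = \psi_0$ and the leftmost inequality of \eqref{eq:massless-exp} is immediately an equality.

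For the second inequality I would combine two observations. First, $f$ satisfies the ODE $f'(r) = -e^{1-r}f(r)$, so
\[
-i\alpha\cdot\nabla\psi_0 = i\,e^{1-|x|}\,f(|x|)\,(\alpha\cdot\hx)\,c.
\]
Second, the Clifford relation $(\alpha\cdot\hx)^2 = |\hx|^2\,\I_4 = \I_4$ gives $\abs{-i\alpha\cdot\nabla\psi_0}^2 = e^{2-2|x|}f(|x|)^2|c|^2$. Multiplying by the weight $e^{|x|}$ and integrating,
\[
\int_{\Rt}\abs{-i\alpha\cdot\nabla\psi_0}^2 e^{|x|}\,dx = e^2 \int_{\Rt}|c|^2 f(|x|)^2 e^{-|x|}\,dx = e^2 \int_{\Rt}\frac{\abs{\psi_0}^2}{e^{|x|}}\,dx,
\]
which is exactly the leftmost term of \eqref{eq:massless-exp}. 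Integrability is not an issue, since $r^2 f(r)^2 e^{-r} = r^2\exp(2e^{1-r}-r)$ is bounded near $r=0$ and decays like $r^2 e^{-r}$ at infinity.

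No real obstacle is expected: once $f$ is given the argument is two lines of Clifford algebra. The only point demanding any insight is the choice $f(r)=\exp(e^{1-r})$, which is natural because $\eta(x)/|x|=e^{|x|}/|x|$ attains its infimum $e$ precisely at $r=1$, and $f$ is built so that the pointwise Bernoulli-type completion implicit in the proof of \Cref{thm:hardy} is saturated.
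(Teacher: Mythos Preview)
Your verification is correct: once one observes that $\psi_0$ is a constant spinor times a radial scalar, $L\psi_0=0$ kills the first inequality, and the Clifford identity $(\alpha\cdot\hx)^2=\I_4$ together with $f'=-e^{1-r}f$ collapses the second. The integrability check is fine.

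The paper proceeds differently. Rather than verifying equality for a given $\psi_0$, it \emph{derives} $\psi_0$ from the equality case of the square-completion argument behind \Cref{thm:hardy}: a minimiser must solve \eqref{eq:attainer-solve}, which after multiplying by $i\alpha\cdot\hx$ and projecting onto partial-wave subspaces becomes the decoupled radial ODE system \eqref{eq:quadrato.rad}. For $m=0$ and $\omega=\eta=e^{|x|}$ this system is solved explicitly; the singular branches are discarded, and what remains is precisely \eqref{eq:minimiser-massless-exp}. The paper then also checks finiteness of the weighted Dirichlet integral. So the paper's route is constructive and explains why these are essentially the only minimisers (within the relevant class), whereas your route is a clean a posteriori check that the stated $\psi_0$ works. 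Your argument is shorter and avoids the partial-wave machinery entirely, but it does not by itself recover the form of $\psi_0$; the paper's ODE derivation does, and moreover fits into the general template used again in \Cref{thm:attainers-massless-pol}.
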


\begin{theorem}
\label{thm:attainers-massless-pol}
Assume that $m = 0$. Define the weight functions $\omega(x) = \eta(x) =(1+|x|)^a$, with $a>3$. With these choices, conditions \eqref{eq:condition-weight-sup} and \eqref{eq:condition-weight-inf} are satisfied with parameters $\gamma = 0$ and $\tau = \frac{a^a}{(a-1)^{a-1}}$, respectively.
Then the inequality  
\begin{equation}\label{eq:massless-pol}
\begin{split}
\left(\frac{a^a}{(a-1)^{a-1}}\right)^2\int_{\Rt}\frac{|\psi|^2}{(1+|x|)^a}\,dx
&\leq
\left(\frac{a^a}{(a-1)^{a-1}}\right)^2 \int_{\Rt}\frac{|(1+2\S\cdot L)\psi|^2}{(1+|x|)^a}\,dx\\
&
\leq\int_{\Rt}\left|-i\alpha\cdot\nabla\psi\right|^2(1+|x|)^a\,dx
\end{split}
\end{equation}
admits as minimiser
\begin{equation}\label{eq:minimiser-massless-pol}
\psi_0(x)=
\begin{pmatrix}
c^-_{1/2,-1}\\
c^-_{-1/2,-1}\\
c^+_{1/2,-1}\\
c^+_{-1/2,-1}\\
\end{pmatrix}
\exp\left[ \left(\frac{a}{a-1}\right)^a (1+|x|)^{1-a}\right]
\quad
c^+_{\pm 1/2,\pm 1}\in\R.
\end{equation}
\end{theorem}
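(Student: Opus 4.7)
The plan is to verify that the weights $\omega = \eta = (1+|x|)^a$ satisfy the hypotheses of Theorem~\ref{thm:hardy}, which gives the chain of inequalities \eqref{eq:massless-pol} for free, and then to check by a direct computation that the explicit function $\psi_0$ from \eqref{eq:minimiser-massless-pol} saturates both inequalities. Since $\omega = \eta$, condition \eqref{eq:condition-weight-sup} holds trivially with $\gamma = 0$, and to compute $\tau$ I would minimise $r \mapsto (1+r)^a/r$ on $(0,\infty)$: differentiation yields the unique critical point $r^* = 1/(a-1)$, at which the value is $a^a/(a-1)^{a-1}$, and for $a>1$ this is a global minimum. The restriction $a > 3$ is needed to guarantee the integrability of $\psi_0$, of $(1+2\S\cdot L)\psi_0$ and of $-i\alpha\cdot\nabla\psi_0$ against the corresponding weights, since the radial factor in $\psi_0$ tends to a positive constant at infinity while the weights grow polynomially.

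Equality in the leftmost inequality of \eqref{eq:massless-pol} is the easy half. The four-spinor prefactor $\chi$ in \eqref{eq:minimiser-massless-pol} is constant, and the scalar $h(r) := \exp\!\left[(a/(a-1))^a (1+r)^{1-a}\right]$ is purely radial, so
\[
L\psi_0 = -i\, x\wedge\nabla\bigl(\chi\, h(|x|)\bigr) = -i\,h'(|x|)\,(x\wedge\hat{x})\,\chi = 0,
\]
which implies $(1+2\S\cdot L)\psi_0 = \psi_0$; this is the role played by the angular quantum number $k_j = -1$ encoded in the notation $c^{\pm}_{\pm 1/2,-1}$. For the rightmost inequality, $\nabla\psi_0 = \chi\, h'(|x|)\,\hat{x}$ together with $(\alpha\cdot\hat{x})^2 = \I_4$ gives $\lvert -i\alpha\cdot\nabla\psi_0\rvert^2 = |h'(|x|)|^2\,|\chi|^2$, and a direct differentiation of $h$ yields
\[
\frac{h'(r)}{h(r)} = \Bigl(\tfrac{a}{a-1}\Bigr)^a (1-a)(1+r)^{-a} = -\frac{\tau}{(1+r)^a} = -\frac{\tau}{\omega(r)},
\]
so pointwise $|h'|^2\,\omega = \tau^2 h^2/\omega$. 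Integrating over $\R^3$ then gives equality in the rightmost step of \eqref{eq:massless-pol}.

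The main obstacle, and what motivates the precise exponent appearing in \eqref{eq:minimiser-massless-pol}, is recognising that the first-order ODE $h'/h = -\tau/\omega$ is exactly the saturation condition for the Cauchy--Schwarz-type step used in the proof of Theorem~\ref{thm:hardy}; integrating it by means of $\int_0^r (1+s)^{-a}\,ds = (1-(1+r)^{1-a})/(a-1)$ produces $h(r) = \exp[(a/(a-1))^a (1+r)^{1-a}]$ up to a multiplicative constant, which is how the closed form \eqref{eq:minimiser-massless-pol} is guessed. Since Theorem~\ref{thm:hardy} already provides sharpness, once these two equalities are verified $\psi_0$ is a genuine minimiser.
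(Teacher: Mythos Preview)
Your argument is correct and in fact more streamlined than the paper's. The paper arrives at $\psi_0$ by going through the partial wave decomposition of \Cref{sec:polar}: it specialises the general minimiser equation \eqref{eq:quadrato.rad} to $m=0$, $\omega=\eta=(1+r)^a$, solves the resulting diagonal $2\times 2$ ODE system for each $\kappa\in\{-1,1\}$, discards the solutions that are singular at the origin, and then reassembles the surviving radial components against $\Phi^+_{m,-1}$ and $\Phi^-_{m,1}$ to obtain the constant-spinor form of $\psi_0$. Only after that does it check, via \eqref{eq:(1+SL)radial}, that the weighted integral of $|-i\alpha\cdot\nabla\psi_0|^2$ is finite for $a>3$.

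You bypass all of this by exploiting directly that $\psi_0$ is a constant spinor times a radial scalar: $L\psi_0=0$ gives equality in the first step, and the single identity $h'/h=-\tau/\omega$ together with $(\alpha\cdot\hat x)^2=\I_4$ gives pointwise equality in the second. The paper's route has the advantage of being constructive---it explains where $\psi_0$ comes from and, through the preliminary analysis in \Cref{sec:minimisers}, effectively classifies all minimisers by forcing $\kappa_j=\pm1$. Your route is purely a verification, but since the theorem only claims that $\psi_0$ \emph{is} a minimiser, that is all that is required, and it avoids any appeal to the spinor spherical harmonics.
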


\begin{remark}
As we can see in see \Cref{sec:minimisers}, we have a general method to construct weights $\omega,\eta$ for which inequality \eqref{eq:hardy.con.phi} admits non-trivial minimisers. Nevertheless we only chose some explicit examples in \Cref{thm:attainers-massless-exp} and \Cref{thm:attainers-massless-pol}, with exponential and polynomial decay respectively.
\end{remark}

\begin{remark}
We are only able to construct minimisers of \eqref{eq:hardy.con.phi} in the massless case when $m=0$, see \Cref{sec:minimisers} for details. This fact is somehow reminiscent of the huge difference between the massless ($m=0$) and the massive ($m>0$) Dirac operators, from a spectral theoretical point of view.
Indeed, in the latter we have a spectral gap which, together with the algebraic properties of the operator, is responsible of the appearance of eigenvalues whenever we add a Coulomb-type potential, see \cite[Section 8.7]{thaller2005advanced}. 
This phenomenon does not occur in the massless case where there is no spectral gap. This seems to suggest that, when $m=0$, there should be a suitable notion of \emph{small} perturbation at the critical scaling.
\end{remark}
\subsection*{Structure of the paper} 
This paper is organized as follows. 
In \Cref{sec:hardy}, we provide the proof of the weighted Hardy-Dirac inequality \eqref{eq:hardy.con.phi}
In \Cref{sec:minimisers} we construct the minimizers for this inequality. Finally, for completeness, in \Cref{sec:polar}, we review the \emph{partial wave decomposition} of the Dirac operator.

\section{Hardy-type inequalities for the Dirac Operator}
\label{sec:hardy}
To prove \eqref{eq:hardy.con.phi} we use the following abstract result.
\begin{lemma}\label{lem:abstract.lem}
  Let $\mathcal{S}, \mathcal{A}$ be respectively a symmetric and an anti-symmetric
operator on a complex Hilbert space $\mathcal{H}$. Then the following holds:
\[
  2\Re \langle
   \mathcal{A} u, 
  \mathcal{S} u
  \rangle_{\H}
  =
  \langle [\mathcal{S},\mathcal{A}] u,u\rangle_{\H},
\]
where $[\mathcal{S},\mathcal{A}]:=\mathcal{SA}-\mathcal{AS}$ is the 
commutator of the operators $\mathcal{S}$ and $\mathcal{A}$.
\end{lemma}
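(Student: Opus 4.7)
The plan is to expand $2\Re\langle \mathcal{A}u,\mathcal{S}u\rangle_{\H}$ as $\langle \mathcal{A}u,\mathcal{S}u\rangle_{\H} + \overline{\langle \mathcal{A}u,\mathcal{S}u\rangle_{\H}}$ and then push $\mathcal{S}$ and $\mathcal{A}$ from one slot of the inner product to the other using their respective symmetry properties, so that both terms are quadratic forms in $u$ and can be combined into the commutator.

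More precisely, I would proceed as follows. First, assuming $u$ lies in a common dense domain on which $\mathcal{S}\mathcal{A}$ and $\mathcal{A}\mathcal{S}$ are defined, I would use the symmetry of $\mathcal{S}$, that is $\langle v,\mathcal{S}w\rangle_{\H}=\langle \mathcal{S}v,w\rangle_{\H}$, to write
\[
\langle \mathcal{A}u,\mathcal{S}u\rangle_{\H}=\langle \mathcal{S}\mathcal{A}u,u\rangle_{\H}.
\]
Second, I would take complex conjugates, noting that $\overline{\langle v,w\rangle_{\H}}=\langle w,v\rangle_{\H}$, and then invoke the anti-symmetry of $\mathcal{A}$, that is $\langle v,\mathcal{A}w\rangle_{\H}=-\langle \mathcal{A}v,w\rangle_{\H}$, to obtain
\[
\overline{\langle \mathcal{A}u,\mathcal{S}u\rangle_{\H}}=\langle \mathcal{S}u,\mathcal{A}u\rangle_{\H}=-\langle \mathcal{A}\mathcal{S}u,u\rangle_{\H}.
\]

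Finally, summing the two identities yields
\[
2\Re\langle \mathcal{A}u,\mathcal{S}u\rangle_{\H}=\langle \mathcal{S}\mathcal{A}u,u\rangle_{\H}-\langle \mathcal{A}\mathcal{S}u,u\rangle_{\H}=\langle [\mathcal{S},\mathcal{A}]u,u\rangle_{\H},
\]
which is the claim. There is essentially no serious obstacle here: the statement is a purely algebraic manipulation of inner products. The only point that deserves a line of comment is the domain issue, namely that $u$ should be chosen so that both compositions $\mathcal{S}\mathcal{A}u$ and $\mathcal{A}\mathcal{S}u$ make sense; in the intended application $\mathcal{S}$ and $\mathcal{A}$ will act on smooth compactly supported spinors, so this is harmless.
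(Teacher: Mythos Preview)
Your proof is correct and follows exactly the same route as the paper: expand $2\Re\langle \mathcal{A}u,\mathcal{S}u\rangle_{\H}$ as the sum of the inner product and its complex conjugate, then use the symmetry of $\mathcal{S}$ and the anti-symmetry of $\mathcal{A}$ to rewrite the two terms as $\langle \mathcal{S}\mathcal{A}u,u\rangle_{\H}$ and $-\langle \mathcal{A}\mathcal{S}u,u\rangle_{\H}$. Your additional remark on the domain is a sensible clarification, but otherwise the argument is identical to the paper's one-line computation.
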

\begin{proof}
  The proof is a simple computation:
  \[
      2\Re \langle \mathcal{A} u,
       \mathcal{S} u
       \rangle_\H
       =
      \langle \mathcal{A} u,
       \mathcal{S} u
       \rangle_\H
       +
      \overline{\langle \mathcal{A} u,
       \mathcal{S} u
       \rangle_\H}
     = \langle  \mathcal{S}\mathcal{A} u,
       u
       \rangle_\H
       -
       \langle  \mathcal{A}\mathcal{S} u,
       u
       \rangle_\H.
      \qedhere
  \]
\end{proof}

\begin{proof}[Proof of \Cref{thm:hardy}]
Let us first the theorem assuming $\psi \in \mathcal S(\Rt,\C^4)$.
The general result will then follow by a standard density argument, see \cite{coulombluis} for details.
Denote $\hx:=x/|x|$ for any $x\neq 0$. Then the proof descends immediately from the explicit computation of the following:
\begin{equation}\label{eq:proof.hardy0}
\begin{split}
0&\leq \int_{\Rt}
	\bigg\lvert
		\Dirac_m\psi
        +\left(\tau-\frac{\gamma}{2}\right)(- i \alpha \cdot \hx)
        \frac{(1+ 2\SL)\psi}{\eta}
	\bigg\rvert^2 \omega\,dx
\\
&\leq
\int_{\Rt}
  \left|\Dirac_m \psi\right|^2\omega \,dx
-  \left(\tau-\frac{\gamma}{2}\right)^2 \int_{\R^3}
    \frac{|(1+2\SL)\psi|^2\omega}{\eta^2}\,dx.
\end{split}
\end{equation}
Indeed let $c>0$ be a constant to be determined, then
\begin{equation}\label{eq:proof.hardy1}
	\begin{split}
	0\leq & 
	\int_{\Rt} \bigg\lvert  (-i\alpha\cdot\nabla+m\beta) \psi+
	c(- i \alpha \cdot \hx)\cdot
    \frac{(1+ 2\SL)\psi}{\eta}\bigg\rvert^2\omega \,dx\\  
=&
  	\int_{\R^3}  | (-i\alpha\cdot\nabla+m\beta)\psi|^2\omega \,dx
  	+
  	c^2 \int_{\R^3}\frac{|(1+2\SL)\psi|^2\omega}{\eta^2}\,dx \\
  	& 
  	+2c\Re\int_{\R^3}
   (-i\alpha\cdot\nabla+m\beta)\psi
  \cdot
  \overline{(-i\alpha\cdot\hx)\frac{\omega}{\eta} (1+2\SL)\psi}
  \,dx.
\end{split}
\end{equation}
We state that the following sharp inequality holds
\begin{equation}\label{eq:doppioprod}
2\Re\int_{\R^3}
   (-i\alpha\cdot\nabla+m\beta)\psi
  \cdot
  \overline{(-i\alpha\cdot\hx)\frac{\omega}{\eta} (1+2\SL)\psi}
  \,dx
  \leq \left(\gamma-2\tau\right)\int_{\R^3}\frac{|(1+2\SL)\psi|^2\omega}{\eta^2} \,dx.
\end{equation}
This combined with \eqref{eq:proof.hardy1} gives:
\[
\int_{\Rt} \left| \Dirac_m \psi\right|^2\omega\, dx \geq 
	-\left(c^2+c(\gamma-2\tau)\right) \int_{\R^3}\frac{|(1+2\SL)\psi|^2\omega}{\eta^2} \,dx.
\]
Maximising the quantity $-(c^2+c(\gamma-2\tau))$, that is assuming $c=\tau-\frac{\gamma}{2}\geq 0$, we get \eqref{eq:proof.hardy0}.
This also proves that \eqref{eq:hardy.con.phi} is sharp.

To conclude the proof it remains to prove \eqref{eq:doppioprod}.
With an explicit computation (see for example \cite[Equation 4.102]{thaller}) we get that
\begin{equation}
\label{eq:alpha.grad.polar}
  -i\alpha\cdot \nabla = 
  - i \alpha \cdot \hx \left (\left(\partial_r + \frac{1}{\abs{x}}\right)\I_4
    - \frac{1}{|x|}(1+ 2\SL)\right),
  \end{equation}
 and so the right-hand side in \eqref{eq:doppioprod} can be expanded as follows:
\[
  \begin{split}
   2\Re\int_{\R^3}
  (-i\alpha&\cdot\nabla + m\beta)\psi
  \cdot
  \overline{(-i\alpha\cdot\hx)\frac{\omega}{\eta} (1+2\SL)\psi}
  \,dx   
  \\
      =\,&
   2\Re\int_{\R^3}
  \left(\partial_r + \frac{1}{\abs{x}}\right) \psi
  \cdot
  \overline{\frac{\omega}{\eta} (1+2\SL)\psi}
  \,dx 
  \\
  & 
  - 2\int_{\R^3} \frac{|(1+2\SL)\psi|^2\omega}{|x|\eta}\,dx
  \\
  &  +2m\Re\int_{\R^3}
  \beta \psi
  \cdot
  \overline{\frac{\omega}{\eta} (1+2\SL)\psi}
  \,dx
  \\
  =:& \,  I + II + III.
  \end{split}
\]
Let us analyse $I$. The operator $\left(\partial_r + \frac{1}{|x|}\right)$ is anti-symmetric. The operator
$\frac{\omega}{\eta} (1+2\SL)$ is symmetric, since the function $\frac{\omega}{\eta}$ is radial and it commutes with the symmetric operator $(1+2\SL)$.
Then, thanks to \Cref{lem:abstract.lem} and applying the Cauchy-Schwartz inequality:
\begin{equation}\label{eq:stima-I}
\begin{split}
I&=\int_{\Rt}
\left[\frac{\omega}{\eta} (1+2\SL),\partial_r + \frac{1}{|x|}\right]\psi\cdot\overline\psi\,dx=
	-\int_{\Rt} \partial_r\left(\frac{\omega}{\eta}\right)(1+2\SL)\psi\cdot\overline{\psi}\,dx\\
	&
	\leq
	\left(
	\int_{\Rt} \frac{|(1+2\SL)\psi|^2\omega}{\eta^2}\,dx
	\right)^{1/2}
	\cdot
	\left(
	\int_{\Rt} \left|\left(\frac{\partial_r\omega\cdot\eta-\omega\cdot\partial_r\eta}{\omega}\right)\psi\right|^2\frac{\omega}{\eta^2}\,dx
	\right)^{1/2}\\
	&\leq \gamma \int_{\Rt} \frac{|(1+2\SL)\psi|^2\omega}{\eta^2}\,dx,	
\end{split}	
\end{equation}
where, in the last inequality, we have used \eqref{eq:condition-weight-sup} and \eqref{eq:1<(1+SL)}. The sharpness of \eqref{eq:stima-I} derives from the fact that, by virtue of \eqref{eq:(1+2SL)=-kBeta}, equality holds when
\[
\psi(x)=f(|x|)\Phi^+_{\pm 1/2,1}(\hx)
\quad
\text{or}
\quad
\psi(x)=f(|x|)\Phi^-_{\pm 1/2,-1}(\hx)
\]
for $f(|x|)$ a radial function.

Let us estimate $II$. Using \eqref{eq:condition-weight-inf} we get
\[
II\leq   -2 \tau \int_{\R^3} \frac{|(1+2\SL)\psi|^2\omega}{\eta^2}\,dx
\]
Let us finally estimate $III$. Since the operators and $i\alpha\cdot\hx$ and $\beta$  anti-commute we rewrite
\[
    III = 2m\Re\int_{\R^3}
  \frac{\omega}{\eta}\psi
  \cdot
  \overline{i\alpha\cdot\hx \beta (1+2\SL)\psi}\,dx.
\]
Since the operator $\beta(1+2\SL)$ is symmetric, the operator $i\alpha\cdot\hx$ is anti-symmetric and they anti-commute (see \cite[Equation 4.108]{thaller}), we have that the operator $ i\alpha\cdot\hx  \beta(1+2\SL)$ is anti-symmetric.
Finally, the multiplication by $\frac{\omega}{\eta}$ is a symmetric operator, and it commutes with $i\alpha\cdot\hx  \beta(1+2\SL)$, being $\frac{\omega}{\eta}$ a radial function. Thus,  
\Cref{lem:abstract.lem} let us conclude that $III=0$, and so \eqref{eq:doppioprod} is proved.

\end{proof}

\section{Computation of the minimisers}
\label{sec:minimisers}

In this section, we adopt a slight abuse of notation: for any radial function $\phi$, we denote $\phi(r) = \phi(x)$, where $r = |x|$.

Let us assume that $\psi$ is a minimizer of \eqref{eq:hardy.con.phi}, that is

\begin{equation}\label{eq:hardy.con.phi.attainer}
\left(\tau-\frac{\gamma}{2}\right)^2 \int_{\Rt}\frac{|\psi|^2\omega}{\eta^2}\,dx=
\left(\tau-\frac{\gamma}{2}\right)^2 \int_{\Rt}\frac{|(1+2\S\cdot L)\psi|^2\omega}{\eta^2}\,dx
=\int_{\Rt}\left|\Dirac_m\psi\right|^2\omega\,dx.<+\infty.
\end{equation}

We can decompose $\psi$ as in \eqref{eq:dec.armonic}, that is
\[
\psi(x)=
  \sum_{j,\kappa_j,m_j} 
\frac{1}{r}\left(
f^+_{m_j,\kappa_j}(r)\Phi^+_{m_j,\kappa_j}(\hx)+
f^-_{m_j,\kappa_j}(r)\Phi^-_{m_j,\kappa_j}(\hx)\right).
\]

From the first equality of \eqref{eq:hardy.con.phi.attainer}, and thanks to \eqref{eq:(1+SL)radial}, we directly have $f^\pm_{m_j,k_j}=0$ for $\kappa_j\neq\pm 1$, or equivalently for $j\neq 1/2$. 

Let us focus on the second equality of \eqref{eq:hardy.con.phi.attainer}. Thanks to \eqref{eq:proof.hardy0} we get that $\psi$ solves the following equation  
\begin{equation}\label{eq:attainer-solve}
\left(-i\alpha\cdot\nabla+m\beta
        +\left(\tau-\frac{\gamma}{2}\right)(- i \alpha \cdot \hx)
        \frac{(1+ 2\SL)\psi}{\eta}\right)\psi=0
\end{equation}

Multiplying this equation by $i\alpha\cdot\hx$ and using \eqref{eq:alpha.grad.polar} we get
\begin{equation}\label{eq:quadrato.no.rad}
\left(
\partial_r+\frac{1}{|x|}
-\left(\frac{1}{|x|}-\frac{\tau-\gamma/2}{\eta}\right)(1+2\SL)
+i\alpha\cdot\hx(m\beta)
\right)
\psi=0
\end{equation}

The action of all the operators appearing in \eqref{eq:quadrato.no.rad} leaves invariant the decomposition in partial wave subspaces, see \Cref{sec:polar}. Thanks to \eqref{eq:(1+2SL)=-kBeta}, we get that for $m_{1/2}=\pm 1/2$ and $\kappa_{1/2}=\pm 1$ we have 
\begin{equation}\label{eq:quadrato.rad}
\begin{pmatrix}
\partial_r+\kappa_{1/2}\left(\dfrac{1}{r}-\dfrac{\tau-\gamma/2}{\eta(r)}\right)&-m\\
-m& \partial_r-\kappa_{1/2}\left(\dfrac{1}{r}-\dfrac{\tau-\gamma/2}{\eta(r)}\right)
\end{pmatrix}
\cdot
\begin{pmatrix}
f^+_{m_{1/2},\kappa_{1/2}}\\
f^-_{m_{1/2},\kappa_{1/2}}
\end{pmatrix}=0.
\end{equation}

We are now ready to prove \Cref{thm:attainers-massless-exp} and \Cref{thm:attainers-massless-pol}.

\begin{proof}[Proof of \Cref{thm:attainers-massless-exp}]
With these choices, we proceed to rewrite \eqref{eq:quadrato.rad}. For sake of clarity, we omit the subscript $1/2$, denoting $\kappa=\kappa_{1/2}\in \{-1,1\}$ and $m=m_{1/2}\in \{-1/2,1/2\}$. The equations reduce to the following system
\begin{equation}\label{eq:solve-exp}
\begin{pmatrix}
\partial_r+\kappa\left(\dfrac{1}{r}-\dfrac{e}{e^r}\right)&0\\
0& \partial_r-\kappa\left(\dfrac{1}{r}-\dfrac{e}{e^r}\right)
\end{pmatrix}
\cdot
\begin{pmatrix}
f^+_{m_{1/2},\kappa_{1/2}}\\
f^-_{m_{1/2},\kappa_{1/2}}
\end{pmatrix}=0.
\end{equation}
whose solutions are
\[
\begin{pmatrix}
f^+_{m,\kappa}\\
f^-_{m,\kappa}
\end{pmatrix}=
\begin{pmatrix}
c^+_{m,\kappa}\exp(-\kappa e^{1-r}) r^{-\kappa}
\\
c^-_{m,\kappa}\exp(\kappa e^{1-r}) r^{\kappa}
\end{pmatrix},
\quad\text{for}\ c^+_{m,\kappa},c^-_{m,\kappa}\in\R.
\]
To eliminate the dominant singularity at the origin, we set $c^+_{m,1}=c^-_{m,-1}=0$ and we get
\[
\begin{split}
\psi_0(x)=&\sum_{m=-1/2}^{1/2} 
\frac{1}{r}\left(
c^+_{m,-1} r \exp(e^{1-r}) \Phi_{m,-1}^+(\hx)
+
c^-_{m,1} r \exp(e^{1-r}) \Phi_{m,1}^-(\hx)
\right)
\\
&
=
\begin{pmatrix}
c^-_{1/2,-1}\\
c^-_{-1/2,-1}\\
c^+_{1/2,-1}\\
c^+_{-1/2,-1}\\
\end{pmatrix}
\exp(e^{1-|x|}).
\end{split}
\]
Identities \eqref{eq:solve-exp} and \eqref{eq:(1+SL)radial} ensure that 
\[
e^2\int_{\Rt}\frac{|\psi_0|^2}{e^{|x|}}\,dx
=
e^2 \int_{\Rt}\frac{|(1+2\S\cdot L)\psi_0|^2}{e^{|x|}}\,dx
=
\int_{\Rt}\left|-i\alpha\cdot\nabla\psi_0\right|^2e^{|x|}\,dx.
\]
Let us finally show that
\[
\int_{\Rt} |-i\alpha\cdot\nabla\psi_0|^2e^{|x|}\,dx<+\infty.
\]
Indeed, thanks to \eqref{eq:1<(1+SL)}
\[
\begin{split}
\int_{\Rt} |-i\alpha\cdot\nabla\psi|^2e^{|x|}\,dx
=&\sum_{m=-1/2}^{1/2} 
\left((c_{m,-1}^+)^2+
(c_{m,-1}^+)^2\right)\int_0^{+\infty} 
\left|\left(\partial_r-\frac{1}{r}\right)\left(r \exp(e^{1-r})\right)\right|^2 e^r\,dr\\
=&\sum_{m=-1/2}^{1/2} 
\left((c_{m,-1}^+)^2+(c_{m,-1}^+)^2\right)
\int_0^{+\infty} 
\exp(-r+2 e^{1-r}+2) r^2\,dr<+\infty,
\end{split}
\]
thus $\psi_0$ is an attainer of \eqref{eq:massless-exp}.

\end{proof}

\begin{proof}[Proof of \Cref{thm:attainers-massless-pol}]
With these choices, we proceed to rewrite \eqref{eq:quadrato.rad}. For sake of clarity, we omit the subscript $1/2$, denoting $\kappa=\kappa_{1/2}\in \{-1,1\}$ and $m=m_{1/2}\in \{-1/2,1/2\}$. The equations reduce to the following system
\[
\begin{pmatrix}
\partial_r+\kappa\left(\dfrac{1}{r}-\dfrac{\tau}{(1+r)^a}\right)&0\\
0& \partial_r-\kappa\left(\dfrac{1}{r}-\dfrac{\tau}{(1+r)^a}\right)
\end{pmatrix}
\cdot
\begin{pmatrix}
f^+_{m_{1/2},\kappa_{1/2}}\\
f^-_{m_{1/2},\kappa_{1/2}}
\end{pmatrix}=0,\quad
\text{for}\ \tau=\frac{a^a}{(a-1)^{a-1}}
\]
whose solutions are
\[
\begin{pmatrix}
f^+_{m,\kappa}\\
f^-_{m,\kappa}
\end{pmatrix}=
\begin{pmatrix}
c^+_{m,\kappa}\exp\left[-k \left(\frac{a}{a-1}\right)^a(1+r)^{1-a}\right] r^{-\kappa}
\\
c^-_{m,\kappa}\exp\left[k \left(\frac{a}{a-1}\right)^a(1+r)^{1-a}\right] r^{\kappa}
\end{pmatrix},
\quad\text{for}\ c^+_{m,\kappa},c^-_{m,\kappa}\in\R.
\]
Analogously as above, we set $c^+_{m,1}=c^-_{m,-1}=0$ and we get
\[
\begin{split}
\psi_0(x)=&\sum_{m=-1/2}^{1/2} 
\frac{1}{r}\Big(
c^+_{m,-1} r \exp\left[ \left(\frac{a}{a-1}\right)^a(1+r)^{1-a}\right] \Phi_{m,-1}^+(\hx)\\
&
\quad\quad\quad
+c^-_{m,1} r \exp\left[ \left(\frac{a}{a-1}\right)^a(1+r)^{1-a}\right] \Phi_{m,1}^-(\hx)
\Big)
\\
=&
\begin{pmatrix}
c^-_{1/2,-1}\\
c^-_{-1/2,-1}\\
c^+_{1/2,-1}\\
c^+_{-1/2,-1}\\
\end{pmatrix}
\exp\left[ \left(\frac{a}{a-1}\right)^a(1+|x|)^{1-a}\right]
\end{split}
\]
Identities \eqref{eq:solve-exp} and \eqref{eq:(1+SL)radial} ensure
the equality in \eqref{eq:minimiser-massless-pol}.
Let us finally show that
\[
\int_{\Rt} |-i\alpha\cdot\nabla\psi_0|^2(1+|x|)^a\,dx<+\infty.
\]
Indeed, thanks to \eqref{eq:1<(1+SL)}
\[
\begin{split}
&\int_{\Rt} |-i\alpha\cdot\nabla\psi_0|^2(1+|x|)^a\,dx
\\=&
\sum_{m=-1/2}^{1/2} 
\left((c_{m,-1}^+)^2+
(c_{m,-1}^+)^2\right)\int_0^{+\infty} 
\left|\left(\partial_r-\frac{1}{r}\right)\left(r \exp\left[ \left(\frac{a}{a-1}\right)^a(1+r)^{1-a}\right]\right)\right|^2 (1+r)^a\,dr\\
=&\sum_{m=-1/2}^{1/2} 
\left((c_{m,-1}^+)^2+(c_{m,-1}^+)^2\right)
\int_0^{+\infty} 
\left(\frac{a^a}{(a-1)^{a-1}}\right)^{2} \exp\left[ 2\left(\frac{a}{a-1}\right)^a(1+r)^{1-a}\right] \frac{r^2}{(r+1)^{a}}
\,dr\\
<&+\infty,
	\end{split}
\]
since $a>3$. Thus $\psi_0$ is a minimiser of \eqref{eq:massless-pol}.
\end{proof}

\appendix
\section{Partial wave subspaces}\label{sec:polar}
In this appendix, we recall the \emph{partial wave subspaces} associated to the Dirac equation. We sketch here this topic, referring to \cite[Section 4.6]{thaller} for further details.

Let $Y^l_n$ be the spherical harmonics. They are defined for $n = 0, 1, 2, \dots$, and $l =-n,-n + 1,\dots , n,$ and they satisfy $\Delta_{\mathbb{S}^2} Y^l_n= n(n + 1)Y^l_n$, where $\Delta_{\mathbb{S}^2}$ denotes the usual spherical Laplacian. Moreover, $Y^l_n$ form a complete orthonormal set in $L^2(\mathbb{S}^2)$.
For $j = 1/2, 3/2, 5/2, \dots , $ and $m_j = -j,-j + 1, \dots , j$, set
\[
\Psi^{m_j}_{j-1/2}:=
\frac{1}{\sqrt{2j}}
\left(\begin{array}{c}
\sqrt{j+m_j}\,Y^{m_j-1/2}_{j-1/2}\\
\sqrt{j-m_j}\,Y^{m_j+1/2}_{j-1/2}\\
\end{array}\right),
\quad
\Psi^{m_j}_{j+1/2}:=\frac{1}{\sqrt{2j+2}}
\left(\begin{array}{c}
\sqrt{j+1-m_j}\,Y^{m_j-1/2}_{j+1/2}\\
-\sqrt{j+1+m_j}\,Y^{m_j+1/2}_{j+1/2}\\
\end{array}\right);
\]
then  $\psi^{m_j}_{j\pm1/2}$ form a complete orthonormal set in $L^2(\mathbb{S}^2)^2$.
For $\kappa_j:=\pm(j+1/2)$ we set
\[
\Phi^+_{m_j,\pm(j+1/2)}:=
\left(\begin{array}{c}
i\,\Psi^{m_j}_{j\pm1/2}\\
0
\end{array}\right),
\quad
\Phi^-_{m_j,\pm(j+1/2)}:=
\left(\begin{array}{c}
0\\
\Psi^{m_j}_{j\mp1/2}
\end{array}\right).
\]
Then, the set $\seq{\Phi^+_{m_j,\kappa_j},\Phi^-_{m_j,\kappa_j}}_{j,\kappa_j,m_j}$ is a 
complete orthonormal basis of $L^2(\mathbb{S}^2;\C^4)$ and it is left invariant by the operators $(1+2\SL)$, $\beta$ and $-i\alpha\cdot\hx$. With respect to this basis these operators are represented by the $2\times 2$ matrices:
\begin{gather}
\label{eq:(1+2SL)=-kBeta}
  (1+2\SL)=
  \begin{pmatrix}
  -\kappa_j &0\\
  0& \kappa_j
  \end{pmatrix},\quad
  \beta =  
  \begin{pmatrix}
  1 &0\\
  0&-1
  \end{pmatrix}
  \quad 
  i\alpha\cdot\hx=
    \begin{pmatrix}
  0&1\\
  -1&0
  \end{pmatrix}
\end{gather}
where the \emph{spin angular momentum operator} $\S$ and the \emph{orbital angular momentum} $L$ are defined in \eqref{eq:defn.spin}

\verde{We define the following space:
\begin{equation}
\mathcal{H}_{m_j,\kappa_j}:=
\seq*{ 
\frac{1}{r}\left(
f^+_{m_j,\kappa_j}(r)\Phi^+_{m_j,\kappa_j}(\hx)+
f^-_{m_j,\kappa_j}(r)\Phi^-_{m_j,\kappa_j}(\hx)\right)
\in L^2(\Rt)
\mid
f^\pm_{m_j,\kappa_j}\in L^2(0,+\infty)}.
\end{equation}}
So, we can write
\begin{equation}\label{eq:dec.armonic}
\psi(x)=
  \sum_{j,\kappa_j,m_j} 
\frac{1}{r}\left(
f^+_{m_j,\kappa_j}(r)\Phi^+_{m_j,\kappa_j}(\hx)+
f^-_{m_j,\kappa_j}(r)\Phi^-_{m_j,\kappa_j}(\hx)\right)
\end{equation}
and, by definition,
\[
\int_{\Rt}|\psi|^2\,dx=  
\sum_{j,\kappa_j,m_j} 
\int_0^{+\infty}|f^+_{m_j,\kappa_j}(r)|^2+
|f^-_{m_j,\kappa_j}(r)|^2\,dr.
\]
Thanks to \cite[Equations 4.109 and 4.129]{thaller} and \eqref{eq:dec.armonic}, we have that for any radial weight $\omega$
\begin{equation}\label{eq:(1+SL)radial}
\begin{gathered}
\int_{\Rt}
|-i\alpha\cdot\nabla\psi|^2\omega\,dx
=
  \sum_{j,\kappa_j,m_j} 
\int_0^{+\infty}
\left(
\left|\left(\partial_r+\frac{\kappa_j}{r}\right) f^+_{m_j,\kappa_j}(r)\right|^2+
\left|\left(\partial_r-\frac{\kappa_j}{r}\right) f^-_{m_j,\kappa_j}(r)\right|^2
\right)\omega(r)\,dr,\\
\int_{\Rt}
|\psi|^2\omega\,dx
=
  \sum_{j,\kappa_j,m_j} 
\int_0^{+\infty}
\left(
|f^+_{m_j,\kappa_j}(r)|^2+
|f^-_{m_j,\kappa_j}(r)|^2\right)\omega(r)\,dr,
\\
\int_{\Rt}
|(1+2\SL)\psi|^2\omega\,dx
=
  \sum_{j,\kappa_j,m_j} 
\int_0^{+\infty}
\kappa_j^2
\left(
|f^+_{m_j,\kappa_j}(r)|^2+
|f^-_{m_j,\kappa_j}(r)|^2\right)\omega(r)dr.
\end{gathered}
\end{equation}
From \eqref{eq:(1+SL)radial}, we directly deduce that
\begin{equation}\label{eq:1<(1+SL)}
\int_{\Rt}
\frac{|\psi|^2}{|x|}\,dx
\leq
\int_{\Rt}
\frac{|(1+2\SL)\psi|^2}{|x|}\,dx,
\end{equation}
and that \eqref{eq:1<(1+SL)} is attained if and only if
$f_{m_j,\kappa_j}^\pm=0$  for $\kappa_j\neq\pm 1$, or equivalently $j\neq 1/2$.

\section*{Acknowledgements}
We would like to thank Luis Vega for the enlightening discussions.

\section*{Funding}
L.~Fanelli and F.~Pizzichillo are partially supported by the Basque Government through the BERC 2022-2025 program, by the Ministry of Science and Innovation: BCAM Severo Ochoa accreditation CEX2021-001142-S / MICIN / AEI / 10.13039/501100011033  PID2021-123034NB-I00 funded by funded by MCIN/ AEI/10.13039/501100011033/ FEDER, UE.
L.~Fanelli is also supported by the projects IT1615-22 funded by the Basque Government, and by Ikerbasque.

\section*{Data availability statement}
No new data were created or analysed in this study.

\section*{Conflict of interest}
The authors have no relevant financial or non-financial interests
to disclose.

\end{document}